\newtheorem{theorem}{Theorem}[section]
\newtheorem{lem}[theorem]{Lemma}
\newtheorem{definition}[theorem]{Definition}
\newtheorem{conjecture}{Conjecture}[section]
\newtheorem{alg}{Algorithm}[section]
\newcommand{\mc}[1]{\mathcal{#1}}
\newcommand{\eps}{\varepsilon}
\newcommand{\sub}{\subseteq}
\newcommand{\sm}{\setminus}
\newcommand{\es}{\emptyset}
\begin{document}
\date{}

\author{Peter Keevash and Liana Yepremyan}
\address{Mathematical Institute, University of Oxford.}
\thanks{Research supported in part by ERC Consolidator Grant 647678.}
\email{\{keevash,yepremyan\}@maths.ox.ac.uk}

\title{Rainbow matchings in properly-coloured multigraphs}

\maketitle

\begin{abstract}
Aharoni and Berger conjectured that 
in any bipartite multigraph that is
properly edge-coloured by $n$ colours 
with at least $n + 1$ edges of each 
colour there must be a matching 
that uses each colour exactly once.
In this paper we consider the same question
without the bipartiteness assumption.
We show that in any multigraph 
with edge multiplicities $o(n)$
that is properly edge-coloured by $n$ colours with 
at least $n + o(n)$ edges of each colour 
there must be a matching of size $n-O(1)$
that uses each colour at most once.
\end{abstract}

\section{Introduction}
A Latin square of order $n$ is an $n\times n$ array of $n$ symbols in which each symbol occurs exactly once in each row and in each column.  
A \emph{transversal} of a Latin square is a set of entries
that represent each row, column and symbol exactly once.
The following fundamental open problem on transversals 
in Latin squares is known as the Ryser-Brualdi-Stein conjecture%
\footnote{Ryser conjectured that the number of transversals 
has the same parity as $n$, 
so any Latin square of odd order has a transversal
(see~\cite{wanless}).
For any even $n$ the addition table of a cyclic
group is a Latin square with no transversal.
Brualdi made Conjecture \ref{conj:brualdistein}
(see e.g.~\cite{brualdi}); Stein~\cite{stein} 
independently made a stronger conjecture.}
(the best known bound is $n-O(\log^2n)$
by Hatami and Shor~\cite{hatamishor}).
\begin{conjecture}[Ryser-Brualdi-Stein]\label{conj:brualdistein} 
Every Latin square of order $n$ has a partial transversal of size $n -1$.
\end{conjecture}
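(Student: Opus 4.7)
The plan is to reformulate the conjecture as a rainbow matching problem and then attack it by combining a robust almost-perfect matching with an absorbing/augmenting step. A Latin square of order $n$ corresponds to a proper $n$-edge-colouring of $K_{n,n}$ in which every colour class is itself a perfect matching of size $n$. A partial transversal of size $k$ is then precisely a rainbow matching of size $k$. So the task is: show that any such coloured $K_{n,n}$ contains a rainbow matching of size $n-1$. This reformulation is attractive because it places the problem inside the very framework the present paper studies (proper edge-colourings with large colour classes), though in the harder regime where colour classes are barely as large as $n$.

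First I would fix a rainbow matching $M$ of maximum size $m$ and assume $m \le n-2$, aiming for a contradiction. Let $U$ be the set of vertices uncovered by $M$ (size $2(n-m) \ge 4$) and let $C$ be the set of colours missing from $M$ (size $n-m \ge 2$). Because every missing colour $c \in C$ is a perfect matching of $K_{n,n}$, it contributes $n$ edges, most of which lie inside $V(M)$, so there are many \emph{alternating structures}: short paths that leave $M$ through a vertex of $U$, traverse an edge of colour $c$, and return to $M$. The standard augmenting strategy is to look for two colours $c,c' \in C$ and two suitable alternating paths that together allow us to swap in $c$ and $c'$ while discarding at most one edge of $M$, producing a rainbow matching of size $m+1$. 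Counting arguments based on proper colouring and the size of the colour classes show there are many candidate paths; the challenge is to find a pair that is \emph{compatible}, meaning their symmetric difference with $M$ is again rainbow.

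The main obstacle, and the reason this is the open Brualdi–Stein conjecture rather than a routine exercise, is that the above compatibility requirement becomes very restrictive when $n-m$ is of constant order: essentially all the colours appearing on the candidate paths already lie in $M$, so swaps cascade and can destroy the matching's rainbow property. My plan to break this deadlock would be to first build, in advance, a small \emph{absorber}: a rainbow sub-structure $A \subset M$ with the property that for every small configuration $R$ of remaining vertices and colours, there is an alternating replacement inside $A \cup R$ that yields a rainbow matching covering everything. Absorbers of size polylogarithmic in $n$ can plausibly be found by random selection together with the switching method, which is precisely how Hatami and Shor reach $n-O(\log^2 n)$. Pushing all the way to $n-1$ would require an absorber that handles the constant-size obstruction exactly; I do not see how to do this unconditionally, and I expect this final reduction — eliminating the logarithmic slack inherent to any probabilistic absorber — to be the genuine hard core of the conjecture.
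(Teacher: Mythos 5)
This statement is the Ryser--Brualdi--Stein \emph{conjecture}, which is open; the paper does not prove it, but cites it as motivation (the paper's own contribution, Theorem~\ref{thm:main}, is a weaker, asymptotic result in the non-bipartite multigraph setting). So there is no ``paper's own proof'' to compare against.

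Your proposal correctly recognises this: the reformulation of a partial transversal as a rainbow matching in a properly $n$-edge-coloured $K_{n,n}$ is the standard and correct equivalence, and the augmenting-path/switching picture you sketch is indeed the framework in which both Hatami--Shor's $n-O(\log^2 n)$ bound and the present paper's algorithm live. But your plan does not constitute a proof: the key step --- constructing an ``absorber'' $A$ that handles \emph{every} constant-size obstruction exactly, rather than with polylogarithmic slack --- is left entirely open, and you explicitly acknowledge you do not know how to do it. That is precisely the gap between what is known and the conjecture, so what you have written is an accurate description of the state of the art, not a proof. Concretely, the obstruction you name is real: when $n-m=O(1)$, essentially every edge on a candidate alternating path carries a colour already present in $M$, so a swap forces a cascade of further swaps, and neither random absorbers nor the bounded-depth switching hierarchy of this paper (Lemma~\ref{lem:kickout}) is known to terminate with only $O(1)$ colours lost in the Latin-square regime where colour classes have size exactly $n$ (the paper's argument crucially uses the $(1+\eps)n$ slack in Lemma~\ref{lem:f} and afterwards). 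If you wish to make progress, the honest framing is to prove a partial result --- e.g.\ reprove $n-O(\log^2 n)$, or strengthen the present paper's Theorem~\ref{thm:main} --- rather than present this as an attack on the full conjecture.
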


It is not hard to see that Conjecture \ref{conj:brualdistein}
is equivalent to saying that any proper $n$-edge-colouring
of the complete bipartite graph $K_{n,n}$ has a rainbow matching
(i.e.\ a matching with no repeated colour) of size $n-1$.
Aharoni and Berger~\cite{aharoniberger} conjectured 
the following generalization.

\begin{conjecture}[Aharoni and Berger]\label{conj:ab} 
Let $G$ be a bipartite multigraph that is properly edge-coloured
with $n$ colours and has at least $n+1$ edges of each colour. 
Then $G$ has a rainbow matching using every colour.
\end{conjecture}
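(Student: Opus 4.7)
The plan is to attempt a maximum-rainbow-matching argument combined with systematic switching. Let $M$ be a maximum rainbow matching in $G$, and suppose for contradiction that some color $c_0$ is unused, so $|M|\le n-1$ and $G$ contains at least $n+1$ edges of color $c_0$. Pigeonhole immediately constrains these edges: none can have both endpoints outside $V(M)$ (else $M+e$ is a larger rainbow matching), and a single-step switch rules out most edges with exactly one endpoint outside $V(M)$. After these reductions one may assume essentially every $c_0$-edge lies entirely inside $V(M)$, so the $n+1$ $c_0$-edges are tightly ``absorbed'' by the $\le n-1$ edges of $M$, forcing some $M$-edges to absorb multiple $c_0$-edges.

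The next step is to build a \emph{switching tree} rooted at $c_0$: its nodes are colors and each tree edge records a move of the form ``replace the $M$-edge of color $c'$ at an endpoint of a $c$-edge $e$ by $e$'', which swaps $c'$ out and brings $c$ in. A breadth-first exploration grows the tree layer by layer, using the hypothesis $|E_c|\ge n+1$ at each node $c$ to furnish many $c$-edges whose other endpoint is incident to an $M$-edge of a color not yet in the tree, thereby producing the next layer. The desired contradiction is an \emph{augmenting configuration}: either a node whose associated $c$-edge escapes $V(M)$ (yielding an immediate enlargement of $M$) or a sufficiently rich set of alternating substitutions whose composition simultaneously re-uses every evicted color and installs $c_0$. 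In the bipartite case, rainbow alternating paths interact cleanly with the bipartition, so composing tree switches with a classical bipartite augmenting path should close the argument.

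The main obstacle is the global combinatorics of the switching tree. Local branching provides only one new color per switch on average, and one must show that after enough layers the tree necessarily reaches a free vertex or accumulates structure whose counting forces a productive collision — all while the $+1$ in the hypothesis is genuinely sharp (cyclic-group Latin squares of even order show that $n$ edges per color does not suffice). Controlling this sharp quantitative boundary is exactly where the Aharoni--Berger conjecture resists proof: one needs to extract a global improvement from a purely local growth rate, and the extra edge per color must be used at every layer of the tree rather than only at the root. This is also why a naive switching argument seems to lose a sub-linear additive term, which is presumably the slack the authors concede when weakening $n+1$ to $n+o(n)$ and enlarging the conclusion to a matching of size $n-O(1)$ in exchange for dropping the bipartiteness hypothesis.
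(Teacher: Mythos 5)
You have not proved the statement, and neither does the paper: Conjecture~\ref{conj:ab} is stated as an open conjecture, and the paper's actual result is the strictly weaker Theorem~\ref{thm:main} (non-bipartite, $(1+\eps)n$ edges per colour, bounded multiplicities, rainbow matching of size $n-k$ for a constant $k$). Your proposal is candid about this --- you identify a switching-tree strategy and then explicitly name the obstruction, namely that the local branching of one new colour per switch does not obviously compose into a global gain, and that the $+1$ hypothesis is tight. So the key thing to recognise is that your last paragraph is not a technical loose end to be tightened but an accurate description of exactly where the conjecture remains open; the ``slack'' you mention is precisely what the authors concede.

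That said, the strategy you sketch is in genuine alignment with what the paper does for its weaker theorem. The paper's reachability algorithm is a layered switching scheme: it starts from $C_0$-edges at free vertices, identifies ``flexible'' colours ($F$) that can be directly swapped out, and then iteratively builds sets $E_1, E_2, \dots$ of $M$-edges that can be evicted by a bounded chain of flips (Lemma~\ref{lem:kickout} makes this precise as a robustness statement). This is your ``switching tree'' with quantitative control over each layer: instead of hoping the tree reaches a free vertex, the paper demands that each layer be linear in size, stops when it is not, and then derives a contradiction by a degree-counting argument on $R$-edges inside the residual set $V'$. The price paid for being able to close this counting argument is exactly the $\eps n$ surplus per colour, the multiplicity bound, and the loss of $k$ colours --- i.e., the three weakenings in Theorem~\ref{thm:main}. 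One thing your sketch misses relative to the paper is that the argument does not in fact use bipartiteness anywhere (you proposed to lean on the bipartition to make alternating paths ``interact cleanly''); the paper's switches are local two-edge flips whose correctness uses only that the colouring is proper and that various vertices/colours can be avoided, so bipartiteness buys nothing here. Conversely, if you want to recover the sharp $n+1$ bound, some structural input beyond local switching seems necessary, and no one currently knows what it should be.
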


Pokrovskiy~\cite{pokrovskiy} showed that this conjecture
is asymptotically true, in that the conclusion holds
if there are at least $n+o(n)$ edges of each colour. 
In this paper we consider the same question
without the bipartiteness assumption.
We obtain a result somewhat analogous to Pokrovskiy's,
although we require an additional (mild) assumption
on edge multiplicities, and we have to allow
a constant number of unused colours.
(We give explicit constants in the statement
for the sake of concreteness, but we did
not attempt to optimise these.)

\begin{theorem}\label{thm:main}
Let $0<\eps<10^{-3}$, $k = 2^{20/\eps}$ and $n>k^2$.
Suppose $G$ is any multigraph with 
edge multiplicities at most $n/k$
that is properly edge-coloured with $n$ colours so that
every colour appears at least $(1+\eps)n$ times.
Then $G$ has a rainbow matching of size $n-k$.
\end{theorem}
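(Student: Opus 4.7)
The plan is to argue by contradiction. Let $M$ be a maximum rainbow matching, with $m := |M| \le n-k-1$, and let $U$ denote the set of missing colours, so $|U| \ge k+1$. Two basic facts hold for each $c \in U$: properness forces the colour class of $c$ to be a matching of size at least $(1+\eps)n$, while maximality of $M$ forces every colour-$c$ edge to meet $V(M)$. A short count (at most $m$ internal colour-$c$ edges fit on the $2m$ vertices of $V(M)$) then gives at least $\eps n + k$ \emph{pendant} colour-$c$ edges, each with exactly one endpoint in $V(M)$. Let $P_c \subseteq V(M)$ denote the set of these inner endpoints, and for $y \in P_c$ let $x_c(y) \notin V(M)$ denote the corresponding outer endpoint.

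The obvious augmenting move is a \emph{two-edge switch}: given $f = y_1y_2 \in M$ and distinct $c_1, c_2 \in U$ with $y_i \in P_{c_i}$ and $x_{c_1}(y_1) \ne x_{c_2}(y_2)$, the set $(M \setminus \{f\}) \cup \{x_{c_1}(y_1)y_1,\, x_{c_2}(y_2)y_2\}$ is a rainbow matching of size $m+1$, contradicting maximality. The plan is to show such a switch must exist. From $\sum_{c \in U} |P_c| \ge (k+1)(\eps n+k)$, roughly $(k+1)\eps/2$ missing colours send a pendant to the average vertex of $V(M)$, and so many $M$-edges admit two distinct missing colours at their two endpoints.

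I expect two main obstacles to this direct approach, both mediated by the multiplicity hypothesis. First, the outer endpoints $x_{c_1}(y_1),\, x_{c_2}(y_2)$ might coincide. If this collision is forced, many missing colours route pendants through a common outside vertex $x$ to a common inner vertex, but the edge-multiplicity bound $\le n/k$ limits the number of distinct colours on any vertex pair to $n/k$, so the reservoir of $\Omega(\eps k n)$ pendants across $U$ cannot be absorbed by collisions alone. Second, there may be $M$-edges on which no suitable pair $(c_1,c_2)$ is available (for example, when one endpoint receives no pendant from $U$ at all). In this case one iterates: a single \emph{pendant-for-$M$-edge} swap preserves the matching size but replaces the missing colour $c$ by $\phi(f)$, yielding a new matching $M'$ against which the two-edge switch can be re-attempted.

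The hardest part will be controlling this iteration. In the non-bipartite setting an alternating walk of such swaps can close into an odd cycle that fails to augment, and one must show that within a bounded number of steps a successful configuration is forced. The choice $k = 2^{20/\eps}$ provides exponential slack: after each swap the missing-colour set shifts but its pendant reservoir remains of size $\Omega(\eps n)$ per colour, so even allowing losses from collisions and cycle closures, within at most $O(k)$ iterations an unblocked two-edge switch must appear. Making this accounting precise — while simultaneously preserving the multiplicity constraint along the walk and tolerating the $k$ colours that the statement permits us to leave unmatched — is where the quantitative hypotheses enter, and this is the step I anticipate absorbing the bulk of the proof.
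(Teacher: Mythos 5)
Your opening moves match the paper: same contradiction setup, same count of at least $\eps n$ ``pendant'' (the paper says \emph{external}) edges per missing colour, and the two-edge switch you describe is precisely the configuration the paper considers in the base case of its key technical lemma. But everything past that point is where the proof actually lives, and your plan for it does not match the paper and, as sketched, has a genuine gap.

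The paper does not iterate a sequence of swaps on the matching until an augmenting switch appears. Instead it builds, inside the fixed maximum matching $M$, a bounded-depth hierarchy $E_1, E_2, \dots, E_m$ of ``reachable'' $M$-edges (the depth is $m < 12/\eps$, determined by a density threshold, not by $k$), and proves a \emph{robustness} lemma: for any reachable colour $c$ and any rainbow matching $M'$ close to $M$, one can pass to another rainbow matching $M''$ of the same size that avoids $c$, changes only $O(2^{1/\eps})$ edges, and moreover \emph{fixes} and \emph{avoids} any prescribed constant-size sets of edges, vertices and colours. This last feature --- the freedom to prescribe what must be kept and what must be avoided --- is what lets the lemma be applied inductively and what lets the structural consequences (C1)--(C3) be derived, and it is entirely absent from your sketch. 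Crucially, the paper's endgame is not ``eventually an augmenting switch is forced''; it is a counting argument: (C1)--(C3) force all $R$-edges to live on a small vertex set, and then properness caps their number below the $|R|(1+\eps)n$ that the colour-frequency hypothesis guarantees.

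Your proposed termination bound of ``$O(k)$ iterations'' is the unsupported step. You give no mechanism by which progress is guaranteed at each iteration, no invariant that decreases, and no account of how the alternating walk avoids revisiting itself in the non-bipartite setting beyond a hope that ``a successful configuration is forced.'' The exponential constant $k = 2^{20/\eps}$ in the theorem does not come from an iteration count; it comes from the number of edge-changes the robustness lemma must tolerate after $m \approx 12/\eps$ levels of recursion (the function $f(i) = 3\cdot 2^{i-1}-2$). So the quantitative role you assign to $k$ is also off. To close the gap you would need to replace the heuristic ``iterate until it works'' with something like the paper's hierarchy-plus-robustness-plus-counting structure, or supply an entirely different termination argument; as written the proposal is a reasonable opening but does not contain the ideas that make the theorem true.
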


Our proof of Theorem~\ref{thm:main} is algorithmic.
Given any rainbow matching $M$ of size less than $n-k$,
we construct a hierarchy of edges $e$ in $M$ for which
we can find another rainbow matching $M'$ of the same size
which does not contain $e$, or indeed
any edge with the same colour as $e$.
We find $M'$ by a sequence of switches
using edges at lower levels in hierarchy.
The switching method is robust, in that there are many
choices at each step, and so it is possible to avoid
any constant size set of vertices or edges
that were altered by previous switches.
To analyse the algorithm, we suppose for contradiction
that it does not find a switch to increase the matching,
and then prove several structural properties of $G$
that culminate in a counting argument
to give the required contradiction.

We describe and analyse our algorithm 
in Section~\ref{sec:alg}, after briefly
summarising our notation in the next section.
Then we give the proof of Theorem~\ref{thm:main}
in Section~\ref{sec:mainproof}.
In the concluding remarks we discuss the relationship
of our work to the recent preprint of
Gao, Ramadurai, Wanless and Wormald \cite{grww}.

\section{Notation}
\label{sec:notation}
For convenient reference we summarize 
our notation in this section. 

Let $G$ be a properly edge-coloured multigraph.
We write $c(e)$ for the colour of an edge $e$.
We also say that $e$ is a $c$-edge, where $c(e)=c$.
Similarly, for any set $C$ of colours,
we say that $e$ is a $C$-edge if $c(e) \in C$.

Now suppose $M$ is a rainbow matching in $G$.
We let $C(M)$ denote the set of colours used by $M$.
For $c \in C(M)$ we let $m_c$ be the edge of $M$ of colour $c$.
If $e=uv\in M$ we say that $u$ and $v$ are {\em twins},
and write $u=t(v)$ (and so $v=t(u)$).
For $S \sub V(M)$ we write $T(S)=\{t(v): v \in S\}$.

Any edge that goes between $V(G) \setminus V(M)$ 
and $V(M)$ is called \emph{external}.

We say that $M$ \emph{avoids} a
set $S$ of vertices if $V(M)\cap S=\emptyset$. 
We say that $M$ \emph{avoids} a
set $C$ of colours if $C(M)\cap C=\emptyset$. 
We say that $M$ \emph{fixes} a 
set $E$ of edges if $E \sub M$.

Suppose $M$ and $M'$ are rainbow matchings in $G$.
We say $M$ and $M'$ are $\lambda$-close
if $|M'|=|M|$ and $|M\triangle M'|\leq \lambda$.
Note that this is non-standard terminology
(standard usage would not include the condition $|M'|=|M|$).

For the remainder of the paper 
we fix $\eps$, $k$, $n$ and $G$
as in the statement of Theorem~\ref{thm:main}. We also use an auxiliary variable $\alpha:=\eps/12$. We fix a rainbow matching $M$ in $G$ of maximum size
and write $V_0 = V(G) \sm V(M)$.
Let $C_0$ be the set of colours not used by $M$.
We note that by maximality there
are no $C_0$-edges contained in $V_0$.
We suppose for a contradiction that $|C_0|>k$.

\section{The reachability algorithm}
\label{sec:alg}

We will describe an algorithm which builds a set of colours that we call \emph{reachable}. Informally, these are the colours $c$ in $M$ such that we can (robustly) replace $c$ by some unused colour $c_0 \in C_0$ and obtain another rainbow matching $M'$ with $|M'|=|M|$. To achieve this we may need to make several other changes to the matching, but if the number of changes is bounded by some constant then we think of $c$ as reachable.
The changes will consist of several flips
along disjoint $M$-alternating paths
(as in the classical matching algorithms).

As a first step, let us show that there are many edges in $M$
which we call `flexible', meaning they have several options 
to be directly replaced by a $C_0$-edge,
with no further changes to the matching.
To be precise, we let $E_0$ be the set of edges in $M$
which can be oriented from tail to head so that
the number of external $C_0$-edges incident to the tail
is at least $\alpha |C_0|$ (if both orientations
work then we choose one arbitrarily).
We let $S_0$ denote the set of heads
of edges in $E_0$ (so the set of tails
is the set $T(S_0)$ of twins of $S_0$).
Then $F=C(E_0)$ is the set of {\em flexible} colours.

For any $v \in S_0$ there are at least
$\alpha|C_0|$ choices of a $C_0$-edge $t(v)u$ 
with $u \in V_0$, and $M' = M \sm \{vt(v)\} \cup \{t(v)u\}$
is a rainbow matching with $|M'|=|M|$. 
However, it may be that $u$ is the same 
for all such $C_0$-edges, so we do not consider
flexible edges to be `reachable'; for this we
will require linearly many options for a switch.
Nevertheless, for the analysis of the algorithm
we need many flexible edges.

\begin{lem}\label{lem:f}
$|F|\geq \alpha n$.
\end{lem}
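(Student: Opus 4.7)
The plan is a short double-counting argument. Since $M$ is rainbow its edges have distinct colours, so $|F|=|E_0|$ and it suffices to prove $|E_0|\geq \alpha n$.

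First I would bound from below the number $X$ of external $C_0$-edges. By maximality of $M$ no $C_0$-edge lies inside $V_0$, so every $C_0$-edge is either external or is contained in $V(M)$. For each colour $c\in C_0$ there are at least $(1+\eps)n$ $c$-edges, and by properness at most $|V(M)|/2=|M|\leq n$ of them lie inside $V(M)$. This gives at least $\eps n$ external $c$-edges, and summing over $c\in C_0$ yields $X\geq \eps n\,|C_0|$.

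Then I would redistribute $X$ across the edges of $M$. For $v\in V(M)$ let $x(v)$ be the number of external $C_0$-edges at $v$, so $X=\sum_{uv\in M}\bigl(x(u)+x(v)\bigr)$. If $uv\in M\setminus E_0$ then neither orientation of $uv$ qualifies, so $x(u),x(v)<\alpha|C_0|$ and hence $x(u)+x(v)<2\alpha|C_0|$. For $uv\in E_0$ the trivial bound $x(u)+x(v)\leq 2|C_0|$, valid because each vertex meets at most one edge per colour, is enough. Combining these,
\[ \eps n\,|C_0| \;\leq\; X \;\leq\; 2|C_0|\bigl(|E_0|(1-\alpha)+\alpha|M|\bigr), \]
which after dividing by $2|C_0|$ and inserting $|M|\leq n$ and $\alpha=\eps/12$ rearranges to $|E_0|\geq \alpha n$. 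There is no real obstacle here; the only point worth checking is that the two trivial bounds $|M|\leq n$ and $x(u)+x(v)\leq 2|C_0|$ do not spoil the conclusion, which the generous choice $\alpha=\eps/12$ accommodates comfortably (indeed the calculation has slack for a constant factor to spare).
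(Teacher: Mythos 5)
Your proof is correct and takes essentially the same approach as the paper: lower-bound the total number of external $C_0$-edges by $\eps n|C_0|$, then upper-bound it by distinguishing vertices of $E_0$-edges (trivial bound $|C_0|$) from the rest (bound $\alpha|C_0|$ by definition of $E_0$). The only cosmetic difference is that you sum contributions over edges of $M$ while the paper sums over vertices of $V(M)$, and your bookkeeping is marginally tighter, but both yield $|F|\geq 5\alpha n\geq\alpha n$.
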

\begin{proof}
For every $c \in C_0$, we note that there are at least $\eps n$
external $c$-edges. Indeed, the total number of $c$-edges is
at least $n + \eps n$, there are none in $V_0$, 
and at most $n$ are contained in $V(M)$, as $|M|<n$
and the colouring is proper. Thus we have
at least $\eps n \cdot |C_0|$ external $C_0$-edges.
Note that any vertex in $V(M)$ is incident
to at most $|C_0|$ of these edges,
and by definition of $F$, any vertex
of $V(M) \sm (S_0 \cup T(S_0))$
is incident to at most $\alpha |C_0|$ of them.
Thus $\eps n \cdot |C_0| \le  2|F| \cdot |C_0| + 2n \cdot \alpha |C_0|$,
so $|F| \geq \left( \frac{\eps}{2}-\alpha\right)n \geq \alpha n$.
\end{proof}

For more options of switches we need longer paths.
For example, if an edge $e$ of $M$ is incident to many 
external $F$-edges, we may be able to use one of these 
edges instead of $e$, and then remove the corresponding 
$F$-edge from $M$ using its incident $C_0$-edges.
We will be able to implement this idea 
for edges satisfying the following condition.
For $c\in F$, we classify external $c$-edges
as \emph{good} or \emph{bad}, where 
$e$ is good if $m_c$ (the $c$-edge in $M$)
is incident to at least $\alpha |C_0|/2$ external 
$C_0$-edges which share no endpoint with $e$.

Now we formulate our algorithm.

\begin{alg}
We iteratively define sets $E_i$ 
of `$i$-reachable' edges.
Each edge in $E_i$ will be assigned
an orientation, from tail to head.
We let $V_i$ denote the set of heads
of edges in $E_i$ (so the set of tails
is the set $T(V_i)$ of twins of $V_i$).
We write $R_i=C(E_i)$ for the 
set of `$i$-reachable' colours.

We let $E_1$ be the set of edges in $M$
that can be oriented from tail to head such that the tail is incident 
to at least $\alpha |F|$ good $F$-edges. 

For $i>1$ we let $E_i$ be the set of
edges in $M \sm \cup_{j<i} E_j$
that can be oriented from tail $t$ to head such that for some $j<i$
the number of $R_j$-edges $tu$
with $u \in \cup_{j=0}^{i-1} V_j$
is at least $\alpha |R_j|$. (Note that for all $i\geq 1$ if both orientations work then we choose one arbitrarily.)

We stop if $|E_i| < \alpha n$.
\end{alg}

Let $m$ be such that the algorithm stops
with $|E_{m+1}| < \alpha n$. As $|M|<n$,
we have $m < 1/\alpha$.

Let $R$ be the set of reachable colours,
that is, $R=\cup_{i=1}^m R_i$.
We also write $V_{reach}=\cup_{i=1}^m V_i$.

Our next lemma shows that the first step of algorithm 
finds some linear proportion of reachable colours. 
\begin{lem} $|R_1|\geq  \alpha n$. 
\end{lem}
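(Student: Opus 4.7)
The plan is to mirror the proof of Lemma \ref{lem:f}, with the pair $(F, R_1)$ playing the role of $(C_0, F)$ there; for this I need a per-colour lower bound on the number of good external $F$-edges. First, for every $c \in F$, each external $c$-edge $e = xy$ (with $x \in V_0$, $y \in V(M)$) is automatically good: writing $m_c = s\, t(s)$ with $t(s)$ its tail in $E_0$, proper colouring forces $y \notin \{s, t(s)\}$, and among the $\ge \alpha|C_0|$ external $C_0$-edges at $t(s)$ guaranteed by $m_c \in E_0$, at most one (the one whose $V_0$-end equals $x$) shares a vertex with $e$, leaving $\ge \alpha|C_0|-1 \ge \alpha|C_0|/2$ good witnesses (using $|C_0| > k$ so that $\alpha|C_0| \ge 2$).

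The crucial structural claim is then that for every $c \in F$ no $c$-edge is contained in $V_0$. Suppose for contradiction such an edge $w_1 w_2$ exists, and set $M' := (M \setminus \{m_c\}) \cup \{w_1 w_2\}$; this is a rainbow matching with $C(M') = C(M)$ and $|M'| = |M|$, so $M'$ is also maximum, and its unmatched vertex set is $V_0' = (V_0 \cup \{s, t(s)\}) \setminus \{w_1, w_2\}$. In $G$ the vertex $t(s) \in V_0'$ is incident to $\ge \alpha|C_0|$ external $C_0$-edges, at most two of which have their $V_0$-endpoint in $\{w_1, w_2\}$; hence $\ge \alpha|C_0|-2 \ge 1$ of them lie entirely inside $V_0'$, contradicting the fact that $M'$ is a maximum rainbow matching. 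Consequently, writing $d_c, a_c, b_c$ for the numbers of $c$-edges in $V_0$, in $V(M)$, and external respectively, we have $d_c = 0$ and $a_c \le |M| < n$, so $b_c \ge (1+\eps)n - (n-1) > \eps n$ for each $c \in F$, and the total number of good $F$-edges is at least $\eps n |F|$.

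Finally I double-count good $F$-edges by their $V(M)$-endpoint: each $v \in V(M)$ is incident to $\le |F|$ of them by proper colouring, and to strictly fewer than $\alpha|F|$ of them whenever $v \notin T(V_1) \cup V_1$ (otherwise $v$'s $M$-edge would lie in $E_1$ oriented with $v$ as tail). Hence
\[
\eps n |F| \;\le\; 2|V_1| \cdot |F| + |V(M)| \cdot \alpha|F| \;\le\; 2|E_1|\,|F| + 2n\alpha |F|,
\]
which gives $|E_1| \ge (\eps/2 - \alpha)n \ge \alpha n$, and so $|R_1| = |E_1| \ge \alpha n$ since $M$ is rainbow. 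The main obstacle is the structural step in the second paragraph: one has to spot the right switch (replacing $m_c$ by a hypothetical $V_0$-edge of colour $c$) and verify that enough of the $C_0$-reservoir at $t(s)$ survives the switch to contradict maximality of $M'$.
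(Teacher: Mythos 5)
Your overall architecture mirrors the paper's: establish a per-colour lower bound on good external $F$-edges, control $c$-edges inside $V_0$ for $c\in F$, then double-count good $F$-edges against $V_1\cup T(V_1)$. The final double-count is fine. However, both of your key claims break because of parallel edges, which the theorem explicitly permits (multiplicity up to $n/k$).

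\textbf{First claim (``every external $F$-edge is good'').} You argue that among the $\ge\alpha|C_0|$ external $C_0$-edges at $t(s)$, ``at most one'' shares a vertex with $e=xy$, namely the one with $V_0$-end equal to $x$. In a multigraph there can be many parallel $C_0$-edges between $t(s)$ and $x$ (distinct colours, one edge each), up to $\min(n/k,|C_0|)$ of them. When $|C_0|$ is only slightly larger than $k$, this can swamp $\alpha|C_0|$: indeed $\alpha|C_0|\approx\alpha k$ whereas $n/k>k$, so $\alpha|C_0|-\min(n/k,|C_0|)$ can be negative, and your ``$\ge\alpha|C_0|/2$ good witnesses'' does not follow. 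The paper does not claim all external $F$-edges are good; it proves instead that each $c\in F$ has at most $2/\alpha$ \emph{bad} edges, by observing that a bad edge $vw$ forces the pair $t(u)w$ to carry multiplicity $\ge\alpha|C_0|/2$ in $C_0$-edges, and the total $C_0$-multiplicity at $t(u)$ is at most $|C_0|$. That argument is precisely calibrated to the multigraph setting; yours is not.

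\textbf{Second claim (``no $c$-edge in $V_0$'').} The same issue recurs: after switching to $M'$, you want an external $C_0$-edge at $t(s)$ avoiding $\{w_1,w_2\}$, and you argue ``at most two'' are blocked. But all $\ge\alpha|C_0|$ of them could land on $w_1$ or $w_2$ when the pair-multiplicities are allowed to be as large as $n/k$. So your stronger claim (zero $c$-edges in $V_0$) is not established, and in fact it need not hold. The paper proves only the weaker ``at most one $c$-edge in $V_0$'', and its proof is robust to parallel edges because it assumes \emph{two} disjoint such edges $e_1,e_2$ and then only needs a single $C_0$-edge $e_3$ at $t(u)$, which automatically avoids at least one of $e_1,e_2$.

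Both gaps have the same cause: you are implicitly reasoning about simple graphs when counting how many edges can be ``blocked'' by a fixed vertex. To repair the proof you would need to replace both ``every external $F$-edge is good'' and ``no $c$-edge in $V_0$'' with quantitative versions that survive multiplicity $n/k$ — which is exactly what the paper does.
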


\begin{proof} 
First we claim that for any colour $c\in F$, 
there can be at most one $c$-edge contained in $V_0$.
Indeed, fix $c\in F$, and suppose there exist two such edges, 
say $e_1$ and $e_2$. Let $m_c=ut(u)$, where $u\in S_0$.
Fix any external $C_0$-edge $e_3$ incident to $t(u)$
(by definition of $S_0$) and suppose without loss of
generality that $e_2 \cap e_3 = \es$
(we have $e_1 \cap e_2 =\es$, as the colouring is proper).
Then $M'=M\setminus \{ut(u)\}\cup\{e_2, e_3\}$ 
is a larger rainbow matching, which contradicts
our choice of $M$, so the claim holds.

We deduce that there are at least $\eps |F| n/2$
external $F$-edges. Indeed, the total number
of $F$-edges is at least $(1+\eps)n|F|$, of which
at most $|F|$ are in $V_0$ (by the claim),
and at most $|F|n$ are contained in $V(M)$
(as $|M|<n$ and the colouring is proper).

Next we claim that at least $\eps |F| n/4$
of these external $F$-edges are good. 
It suffices to show that for each $c \in F$
there are at most $2/\alpha$ bad $c$-edges.
To see this, fix $c \in F$, let $m_c=ut(u)$,
where $u \in S_0$, and consider any
external $c$-edge $vw$ that is bad,
with $w \in V_0$. There are at least
$\alpha |C_0|$ external $C_0$-edges
incident to $t(u)$ (by definition of $S_0$),
of which at most $\alpha |C_0|/2$ 
do not contain $w$ (by definition of `bad'),
so $t(u)w$ has multiplicity
at least $\alpha |C_0|/2$ in $C_0$-edges.
As the colouring is proper, 
there are at most $|C_0|$ (external)
$C_0$-edges incident to $t(u)$,
and each bad external $c$-edge 
determines a unique $w$ incident
to at least $\alpha |C_0|/2$ 
of them, so the claim follows.

On the other hand, by definition of $R_1$
there are at most
$2|R_1| |F| + n \cdot \alpha|F|$
good external $F$-edges, so by the claim
$\eps |F| n/2 \le 2|R_1| |F| + n \cdot \alpha|F|$,
giving $|R_1|\geq \eps n/8 - \alpha n/2 \geq \alpha n$.
\end{proof}

Our next definition and associated 
lemma justify our description
of the colours in $R$ as `reachable'.
The lemma shows that they can be replaced
in the matching, maintaining its size
and being rainbow, while making only
constantly many changes and avoiding
certain proscribed other usages.

\begin{definition} \label{def:robust}
Say that $M$ is $(i,\lambda,\lambda',a)$-robust 
if for any rainbow matching $M'$
which is $\lambda$-close to $M$,
any $c\in R$ such that $m_c=vt(v)\in M'$ 
with $v\in \cup_{j=1}^i V_j$, and 
any $E_{\textit{fix}}\subseteq M'\setminus \{m_c\}$, 
any $V_{\textit{avoid}}\subseteq V\setminus  V(M')$ 
and $C_{\textit{avoid}}\subseteq C\setminus C(M')$, 
each of size at most $a$,
there is a rainbow matching $M''$ in $G$ such that 
\begin{itemize}
\item [(1)] $c\notin C(M'')$ and $v\notin V(M'')$,
\item [(2)] $M''$ is $\lambda'$-close to $M$,
\item [(3)] $M''$ fixes $E_{\textit{fix}}$,
and avoids $C_{\textit{avoid}}$ and $V_{\textit{avoid}}$.
\end{itemize}
\end{definition}

We will apply the following statement only
in the case $i=m$, but we formulate it for all $i$
to facilitate an inductive proof.
We write $f(i)=3 \cdot 2^{i-1} - 2$. 

\begin{lem}\label{lem:kickout}
Let $1 \le i \le m$ and $0 \le \lambda \le 2^{15/\eps - i/15}$. 
Then $M$ is $(i,\lambda,\lambda+f(i),2(m-i+1))$-robust.
\end{lem}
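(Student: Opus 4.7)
\emph{Strategy and base case.} The plan is to induct on $i$. For the base case $i=1$, $v\in V_1$ guarantees at least $\alpha|F|$ good external $F$-edges at $t(v)$. I would pick one, say $t(v)w$ of colour $c'\in F$ with $w\in V_0$, and then, writing $m_{c'}=u\,t(u)$ with $u\in S_0$, pick an external $C_0$-edge $t(u)x$ sharing no endpoint with $t(v)w$ (goodness guarantees at least $\alpha|C_0|/2$ options). Set $M''=M'\setminus\{m_c,m_{c'}\}\cup\{t(v)w,t(u)x\}$. The two selections must avoid: colours in $C_{\textit{avoid}}\cup C(E_{\textit{fix}})\cup\{c\}$, vertices in $V_{\textit{avoid}}\cup V(E_{\textit{fix}})$, the at most $\lambda$ colours $c'$ for which $m_{c'}\notin M'$, and the at most $\lambda$ colours $c_0\in C(M')\setminus C(M)$. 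Since $\alpha|F|\ge\alpha^2n$ and $\alpha|C_0|/2$ both dominate these $O(m+\lambda)$ exclusions (using $n>k^2$ and $\lambda\le 2^{15/\eps}$), valid selections exist.

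\emph{Inductive step $i>1$.} If $v\in V_j$ for some $j<i$, the inductive hypothesis at level $j$ suffices, since level $j$ comes with larger budget $2(m-j+1)\ge 2(m-i+1)$ and smaller $f(j)<f(i)$. Otherwise $v\in V_i$, and the definition of $E_i$ supplies some $j<i$ together with at least $\alpha|R_j|$ $R_j$-edges $t(v)u$ with $u\in\bigcup_{j'<i}V_{j'}$. Pick one such edge $t(v)u$ of colour $c'\in R_j$ and $u\in V_{j'}$, avoiding the $O(m+\lambda)$ forbidden colours and $O((m+\lambda)n/k)$ forbidden multi-edges (the multiplicity cap $n/k$ keeps the latter bounded). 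Apply the hypothesis at level $j$ to $M'$, with $E_{\textit{fix}}$ enlarged by $\{m_c\}$ (and by $\{m_u\}$ if $j'\ge 1$, so that the second kickout can later apply), obtaining $\tilde M_1$ that is $(\lambda+f(j))$-close to $M$ with $c'\notin C(\tilde M_1)$. If $j'\ge 1$, let $c''=c(m_u)$ and apply the hypothesis at level $j'$ to $\tilde M_1$, with $C_{\textit{avoid}}$ enlarged by $\{c'\}$, to obtain $\tilde M_2$ with $c''\notin C(\tilde M_2)$ and $u\notin V(\tilde M_2)$ (condition (1) of the lemma at level $j'$); otherwise $\tilde M_2=\tilde M_1$, and we arrange $u\notin V(\tilde M_1)$ by choosing $u\in V_0\setminus V(M')$ (again an $O(\lambda)$ restriction) and adjoining $u$ to the $V_{\textit{avoid}}$ of the first call. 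Finally set $M''=\tilde M_2\setminus\{m_c\}\cup\{t(v)u\}$: this is a rainbow matching avoiding $c$ and $v$, fixing $E_{\textit{fix}}$ and avoiding the original $C_{\textit{avoid}}, V_{\textit{avoid}}$, with $|M\triangle M''|\le\lambda+f(j)+f(j')+2\le\lambda+2f(i-1)+2=\lambda+f(i)$ by the recurrence.

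\emph{Main obstacle.} The delicate part is bookkeeping the fix and avoid sets, together with the closeness parameters, through the nested recursions while respecting the strict constraints $V_{\textit{avoid}}\subseteq V\setminus V(M')$ and $E_{\textit{fix}}\subseteq M'\setminus\{m_c\}$ (which force us to choose $u$ so that $m_u\in M'$ and, when $j'=0$, so that $u\notin V(M')$). Each recursive call enlarges fix/avoid sets by $O(1)$, fitting within the $2(i-j)\ge 2$ extra slack at level $j<i$; and the cumulative closeness $\lambda+f(i-1)$ must still satisfy $\lambda+f(i-1)\le 2^{15/\eps-j/15}$ at each recursive level. This last point is where the parameters are tightest: it holds because $f(m)=O(2^m)=O(2^{12/\eps})$ stays comfortably below the threshold $2^{(15-0.8)/\eps}=2^{14.2/\eps}$ thanks to $m<1/\alpha=12/\eps$, leaving the $2^{1/15}$-per-level slack in the hypothesis large enough to absorb $f(j)$.
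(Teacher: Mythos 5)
Your proposal follows essentially the same approach as the paper's proof: the same base-case construction using 2-edge paths through $t(v)$ (a good $F$-edge) and $t(u)$ (a $C_0$-edge), and the same inductive step performing one or two nested recursive kick-outs depending on whether the selected $R_j$-edge from $t(v)$ lands in $V_0$ or in some $V_{j'}$ with $j'\ge 1$, followed by the flip $M''=\tilde M_2\setminus\{vt(v)\}\cup\{t(v)u\}$. The only differences are presentational (the paper splits the inductive step into two explicit subcases with the $\alpha^2 n/2$ pigeonholing, and spells out the forbidden-configuration counts in more detail), but your bookkeeping of the fix/avoid budgets, the recurrence $f(j)+f(j')+2\le f(i)$, and the closeness check $\lambda+f(j)\le 2^{15/\eps - j'/15}$ all match the paper's.
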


\begin{proof} 
Let $M'$, $c$, $v$, $E_{\textit{fix}}$, $V_{\textit{avoid}}$ 
and $C_{\textit{avoid}}$ be given as in Definition \ref{def:robust}.
We will show by induction on $i$ that there is a rainbow matching 
$M''$ satisfying (1), (2) and (3) of Definition \ref{def:robust}.

First we consider the base case $i=1$.  
We consider the set 
$\mc{T}$ of all pairs $(vt(v)w,ut(u)z)$
of $2$-edge paths, where $t(v)w$ 
is a good $F$-edge, $w$ and $z$ 
are distinct and in $V_0$, and we have
$c(t(v)w)=c(ut(u))$ and $c(t(u)z) \in C_0$.  If $vt(v), ut(u)$ also appear in $M'$ (a priori  they are from $M$),
we will obtain $M''$ from $M'$ 
by removing $vt(v)$ and $ut(u)$ 
and adding $t(v)w$ and $t(u)z$.
Note that any such $M''$ is 
$(\lambda+4)$-close to $M$, 
as required for (2).
There are various constraints that
need to be satisfied for $M''$ to be
a rainbow matching satisfying (1) and (3);
we will see below that $\mc{T}$ is large 
enough so that we can pick an element of $\mc{T}$ such that the matching derived from it satisfies these constraints.

We claim that $|\mc{T}| \ge \alpha^3 |C_0|n/2$.
To see this, note that as $v \in V_1$ 
there are at least $\alpha |F|$
good external $F$-edges incident to $t(v)$.
For each such edge $t(v)w$, there is an edge 
in $M$ of the same colour, which we can
label as $ut(u)$ so that there are at least
$\alpha |C_0|/2$ external $C_0$-edges $t(u)z$
with $z \ne w$. As $|F| \ge \alpha n$ 
by Lemma \ref{lem:f} the claim follows.

Next we estimate how many configurations
in $\mc{T}$ are forbidden by the various
constraints described above.

We start by considering the constraints on $w$,
namely $w \in V'_0$ and $w \notin V_{\textit{avoid}}$.
These forbid at most $2\lambda + 2m$ choices of $w$.
For each such $w$, there are at most $n/k$
choices of an $F$-edge $t(v)w$, which determines
$ut(u) \in M$ with $c(ut(u))=c(t(v)w)$,
and then there are at most $|C_0|$ choices
for a $C_0$-edge $t(u)z$. Thus we forbid at most
$(2\lambda + 2m) |C_0| n/k$ configurations.

We also have the same constraints on $z$,
i.e.\ $z \in V'_0$ and $z \notin V_{\textit{avoid}}$.
Again, this forbids at most $2\lambda + 2m$ choices of $z$.
For each such $z$, there are at most $|C_0|$ choices
for a $C_0$-edge $t(u)z$, which determines $ut(u) \in M$,
and then at most one edge $t(v)w$ with the same colour.
Thus we forbid at most $(2\lambda + 2m)|C_0|$ configurations.

Now we consider the constraints on $ut(u)$,
namely $ut(u) \in M'$ and $ut(u) \notin E_{\textit{fix}}$.
These forbid at most $\lambda + 2m$ choices of $ut(u)$.
For each such $ut(u)$, there is at most
one edge $t(v)w$ with the same colour,
and at most $|C_0|$ choices for $t(u)z$,
so we forbid at most $(\lambda + 2m)|C_0|$ configurations.

For $t(v)w$ we have the constraint
$c(t(v)w) \in C(M')$, which forbids at most
$\lambda$ edges $t(v)w$. This fixes $ut(u) \in M$
with the same colour, and then there are
at most $|C_0|$ choices for $t(u)z$,
so we forbid at most $\lambda|C_0|$ configurations.

Finally, for $t(u)z$ we have the constraint 
$c(t(u)z) \notin C_{\textit{avoid}} \cup (C(M') \sm C(M))$.
There are at most $2m+\lambda$ choices 
for $c' \in C_{\textit{avoid}}\cup (C(M') \sm C(M))$,
then at most $n$ choices for a $c'$-edge $t(u)z$,
which determines $ut(u)$ and then at most one
edge $t(v)w$ with the same colour,
so we forbid at most $(2m+\lambda)n$ configurations.

The total number of forbidden configurations
is at most $2(\lambda + m) |C_0|(2 + n/k)
+ (2m+\lambda)n < \alpha^3 |C_0| n/2 \le |\mc{T}|$,
as $\lambda/k \le 2^{-5/\eps} < 10^{-6} \eps^3$,
so $2(\lambda + m) |C_0|(2 + n/k) < \alpha^3 |C_0| n/4$
and $\alpha^3 |C_0| n/4 > \alpha^3 kn/4
> (2m+\lambda)n$. We can therefore
pick a configuration in $\mc{T}$
that satisfies all the above constraints,
which completes the proof of the base case.

Now suppose $i>1$.
As $v\in V_i$, there is $j<i$ such that
the number of $R_j$-edges $t(v) w$
with $w \in \cup_{j=0}^{i-1} V_{j}$
is at least $\alpha |R_j| \ge \alpha^2 n$.
We will consider two cases according to whether
most of these edges go to $V_0$. In each
case we note by the induction hypothesis
that $M$ is $(j,\lambda,\lambda+f(j),2(m-j+1))$-robust.

Suppose first that at least $\alpha^2 n/2$
of these $R_j$-edges $t(v) w$ have $w \in V_0$.
Then we can fix such an edge $t(v)w$
with $c'=c(t(v)w)\in R_j$ such that
$w \in V_0'\setminus V_{\textit{avoid}}$
and $m_{c'}\in M'\setminus E_{\textit{fix}}$
(the first condition forbids at most 
$(2m+2\lambda) n/k$ choices and
the second at most $2m+\lambda$ choices).

By Definition \ref{def:robust}
applied to $m_{c'}$ in $M'$ with sets 
$E_{\textit{fix}}':=E_{\textit{fix}}\cup\{vt(v)\}$,  
$V_{\textit{avoid}}':=V_{\textit{avoid}}\cup \{w\}$ 
and $C_{\textit{avoid}}':=C_{\textit{avoid}}$,
each of size at most $2(m-i+1)+1 < 2(m-j+1)$,
we obtain a rainbow matching $M_1$ 
which is $(\lambda+f(j))$-close to $M$, 
satisfies (3), contains $v(t)v$,
and avoids the colour $c'$ and vertex $w$.
As $\lambda+f(j)+2 \le \lambda+f(i)$, we see that
$M''= M_1\setminus \{vt(v)\}\cup \{t(v)w\}$
is as required to complete the proof in this case.

It remains to consider the case that
at least $\alpha^2 n/2$ of the $R_j$-edges 
$t(v)u$ have $u \in \cup_{j=1}^{i-1} V_{j}$.
We can fix such an edge $t(v)u$
with $c(t(v)u)=c_1 \in R_j$ such that 
$c(ut(u)) = c_2 \in R_{j'}$ with $1 \le j' \le i-1$
and $ut(u) \in M' \sm E_{\textit{fix}}$
(this forbids at most $(2m+\lambda) n/k$ choices),
and $m_{c_1} \in M' \sm E_{\textit{fix}}$
(which forbids at most $2m+\lambda$ choices).

By Definition \ref{def:robust}
applied to $m_{c_1}$ in $M'$ with sets 
$E_{\textit{fix}}':=E_{\textit{fix}}\cup\{vt(v),ut(u)\}$,  
$V_{\textit{avoid}}':=V_{\textit{avoid}}$ 
and $C_{\textit{avoid}}':=C_{\textit{avoid}}$,
each of size at most $2(m-i+1)+2 \le 2(m-j+1)$,
we obtain a rainbow matching $M_1$ 
which is $(\lambda+f(j))$-close to $M$, 
satisfies (3), contains $v(t)v$ and $ut(u)$,
and avoids the colour $c_1$.

We apply Definition \ref{def:robust} 
again to $m_{c_2} = ut(u) \in M_1$ with sets
$E_{\textit{fix}}'':=E_{\textit{fix}}\cup\{vt(v)\}$,  
$V_{\textit{avoid}}'':=V_{\textit{avoid}}$ 
and $C_{\textit{avoid}}'':=C_{\textit{avoid}} \cup \{c_1\}$,
noting that $\lambda+f(j) \le 2^{15/\eps - i/15}
+ 2^{1 + 1/\alpha} \le 2^{15/\eps - j'/15}$, 
as $\alpha=\eps/12$, $j' \le i-1$,
$i \le m < 1/\alpha$ and $\eps<10^{-3}$.
Thus we obtain a rainbow matching $M_2$ 
which is $(\lambda+f(j)+f(j'))$-close to $M$, 
satisfies (3), contains $v(t)v$,
and avoids the colour $c_1$ and the vertex $u$.
As $\lambda+f(j)+f(j')+2 \le \lambda+f(i)$, we see that
$M''= M_2\setminus \{vt(v)\}\cup \{t(v)u\}$
is as required to complete the proof.
\end{proof}

\section{Structure and counting}
\label{sec:mainproof}

Continuing with the proof strategy outlined above,
we now use the maximality assumption on $M$
to prove the following structural properties of $G$,
which we will then combine with a counting argument
to obtain a contradiction to the assumption that $|C_0|>k$,
thus completing the proof of Theorem~\ref{thm:main}.

In the proof of the following lemma
we repeatedly use Lemma \ref{lem:kickout}
in the case $i=m$, i.e.\ that
$M$ is $(m,\lambda,\lambda+f(m),2)$-robust,
for any $0 \le \lambda \le 2^{15/\eps - m/15}$. Here we recall that $R=\cup_{i=1}^m R_i$ 
and $m < 1/\alpha = 12/\eps$,
so Definition \ref{def:robust} is applicable
whenever $0 \le \lambda \le 2^{14/\eps}$.

\begin{lem}\label{ri-properties} \
\begin{itemize} 
\item[(C1)]  Any $v\in V_{\textit{reach}}$ is 
not incident to an external $R$-edge, 
\item [(C2)]  Any two $u,v\in V_{\textit{reach}}$  
are not adjacent by an $R$-edge.
\item [(C3)] There are no $R$-edges contained in $V_0$.
\end{itemize}
\end{lem}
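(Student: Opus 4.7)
The plan is to derive in each case a contradiction with the maximality of $M$ by using Lemma~\ref{lem:kickout} with $i=m$ to kick out the reachable colours that obstruct an augmenting configuration, then using the hypothetical $R$-edge to enlarge the matching. Case (C3) is immediate: given an $R$-edge $xy\subseteq V_0$ of colour $c'$, I would apply robustness to $c'$ starting from $M'=M$ (which is $0$-close) with $V_{\textit{avoid}}=\{x,y\}$ and $E_{\textit{fix}}=C_{\textit{avoid}}=\emptyset$; since $x,y\in V_0\subseteq V\setminus V(M)$ and all sets have size at most $a=2$, the output $M''$ satisfies $|M''|=|M|$, omits the colour $c'$, and misses both $x$ and $y$, so $M''\cup\{xy\}$ is a strictly larger rainbow matching.

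For (C1), where $v\in V_{\textit{reach}}$ has an external $R$-edge $vx$ with $x\in V_0$ and $c'=c(vx)\in R$, two successive kick-outs suffice. Since $v\in V_{\textit{reach}}$, the colour $c=c(vt(v))$ also lies in $R$, and properness at $v$ forces $c\ne c'$. I would first kick out $c$ with $E_{\textit{fix}}=\{m_{c'}\}$ and $V_{\textit{avoid}}=\{x\}$ to obtain a matching $M_1$ that still contains $m_{c'}$ and misses $\{v,x\}$; then kick out $c'$ from $M_1$ with $V_{\textit{avoid}}=\{v,x\}$ and $C_{\textit{avoid}}=\{c\}$. Adding $vx$ to the result gives a rainbow matching larger than $M$, the desired contradiction.

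Case (C2) is the main one and requires three kick-outs. Given $u,v\in V_{\textit{reach}}$ joined by an $R$-edge $uv$ of colour $c'$, one first observes that $u$ and $v$ cannot be twins: if they were, then the single edge $uv\in M$ would need to lie in two $E_j$'s with $u$ and $v$ as the respective heads, violating either the uniqueness of the assigned head (when the two indices coincide) or the fact that $E_j\subseteq M\setminus\bigcup_{j'<j}E_{j'}$ (otherwise). Hence the $M$-edges $m_{c_u}=ut(u)$, $m_{c_v}=vt(v)$ and $m_{c'}$ (with $c_u=c(ut(u))$, $c_v=c(vt(v))$) are pairwise distinct and carry three distinct reachable colours. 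I would kick out $c_u$ first, fixing $\{m_{c'},m_{c_v}\}$; then $c_v$, fixing $m_{c'}$, avoiding $u$ and forbidding $c_u$; then $c'$, avoiding $\{u,v\}$ and forbidding $\{c_u,c_v\}$. Every avoidance or fixing set has size at most $2=a$, and adding the edge $uv$ to the final matching produces the contradiction.

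The only obstacle, common to (C1) and (C2), is bookkeeping. At each intermediate stage one must verify that the $M$-edge to be removed still lies in the current matching (arranged by fixing it at the previous step) and that the avoidance sets are disjoint from the current matching and its colour set (arranged by the preceding kick-outs and by $x\in V_0$). The closeness parameter grows by at most $f(m)$ per application, so after at most three applications the matching is at most $3f(m)$-close to $M$; since $m<1/\alpha=12/\eps$, we have $3f(m)<2^{14/\eps}\le 2^{15/\eps-m/15}$, keeping us within the range where Lemma~\ref{lem:kickout} applies throughout.
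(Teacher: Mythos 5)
Your proof is correct and follows essentially the same approach as the paper: for each part, iterate Lemma~\ref{lem:kickout} in the case $i=m$ to remove the obstructing reachable colours and vertices, then append the hypothetical $R$-edge to contradict maximality of $M$. You are somewhat more explicit than the paper in a couple of places (the observation that $u$ and $v$ cannot be twins in (C2), which the paper's $E_{\textit{fix}}$ requirement uses implicitly, and the full argument for (C3), which the paper omits as routine), and you carry a few superfluous $C_{\textit{avoid}}$ entries that the paper does without, but the underlying argument is the same.
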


\begin{proof} 
\textbf{(C1):} Suppose $v$  is incident to an external $R$-edge, 
say $w\in V_0$ and $c=c(vw)\in R$.  
Applying Definition \ref{def:robust} to $vt(v)$ in $M'=M$  
with sets $E_{\textit{fix}}=\{m_c\}$ and
$V_{\textit{avoid}}=\{w\}$, $C_{\textit{avoid}}=\es$,
we obtain a rainbow matching $M_1$
that is $f(m)$-close to $M$, contains $m_c$ and avoids $w$.
As $f(m) < 2^{14/\eps}$, 
we can apply Definition \ref{def:robust} 
again to $m_c$ in $M_1$ with sets 
$E_{\textit{fix}}=C_{\textit{avoid}}=\es$ and
$V_{\textit{avoid}}=\{v,w\}$,
obtaining a rainbow matching $M_2$
that is $2f(m)$-close to $M$, and avoids 
the vertices $v$, $w$ and the colour $c$.
Now $M_2 \cup\{uw\}$ is a larger rainbow matching 
than $M$, which is a contradiction. 

\textbf{(C2):}
Suppose for a contradiction
that we have $u,v\in V_{\textit{reach}}$ 
and an edge $uv$ with $c(uv) = c_3 \in R$.
Let $c_1=c(vt(v))$ and $c_2=c(ut(u))$.
Then $c_1$ and $c_2$ are also in $R$.

Applying Definition \ref{def:robust} to $vt(v)$ in $M'=M$  
with sets $E_{\textit{fix}}=\{ut(u),m_{c_3}\}$ and
$V_{\textit{avoid}}=C_{\textit{avoid}}=\es$,
we obtain a rainbow matching $M_1$
that is $f(m)$-close to $M$, 
contains $ut(u)$ and $m_{c_3}$,
and avoids $v$.

Applying Definition \ref{def:robust} again to $ut(u)$ in $M_1$
with sets $E_{\textit{fix}}=\{m_{c_3}\}$,
$V_{\textit{avoid}}=\{v\}$ and
$C_{\textit{avoid}}=\es$,
we obtain a rainbow matching $M_2$
that is $2f(m)$-close to $M$, contains $m_{c_3}$,
and avoids $u$ and $v$.

Applying Definition \ref{def:robust} 
a final time to $m_{c_3}$ in $M_2$
with $V_{\textit{avoid}}=\{u,v\}$ and
$E_{\textit{fix}}=C_{\textit{avoid}}=\es$,
noting that $2f(m) < 2^{14/\eps}$, 
we obtain a rainbow matching $M_3$
that is $3f(m)$-close to $M$, 
and avoids the vertices $u$ and $v$
and the colour $c_3$.

Now $M_3 \cup\{uv\}$ is a larger rainbow matching 
than $M$, which is a contradiction. 

\textbf{(C3):} 
We omit the proof as it is very similar 
to that of the first two statements.
\end{proof}

\begin{proof}[Proof of Theorem~\ref{thm:main}]
Recall that $M$ is a maximum rainbow matching in $G$,
we write $V_0 = V(G) \sm V(M)$ and suppose
for a contradiction that $|M|<n-k$.
Recall also that the reachable colours
are $R=\cup_{i=1}^m R_i$,
we write $V_{reach}=\cup_{i=1}^m V_i$,
and the algorithm stopped 
with $|V_{m+1} |< \alpha n$.

As every colour appears at least $(1+\eps)n$ times
the number of $R$-edges is at least $|R| (1+\eps)n$.
By Lemma~\ref{ri-properties} they are all incident
to $T(V_{reach})$ or $V(M) \sm V_{reach}$.
Write \[V^* = T(V_{reach}) \cup V_{m+1}\cup T(V_{m+1})
\ \text{ and } \
V' = V(M) \sm (V_{reach} \cup V^*). \]
At most $|V^*||R| < (|R|+2\alpha n)|R|$ 
of the $R$-edges are incident to $V^*$, so at least 
$|R| (1+\eps)n - (|R|+2\alpha n)|R| > |R|(|V'|+ \eps n)/2$ 
are incident to $V'$ but not to $V^*$.

By definition of our algorithm, 
for each $v$ in $V'$ the number of $R$-edges
$vu$ with $u \in V_0 \cup V_{reach}$
is at most $\alpha |R|$.
All remaining $R$-edges are contained in $V'$,
and they number at least
$|R|(|V'|+ \eps n)/2 - \alpha |R||V'| > |R||V'|/2$.
But the colouring is proper, 
so there are at most $|V'|/2$
edges of any colour contained in $V'$.
This contradiction completes the proof.
\end{proof}

\section{Conclusion}
A similar result to Theorem \ref{thm:main}
was very recently obtained independently by
Gao, Ramadurai, Wanless and Wormald \cite{grww}.
Their result is closer than ours 
to the spirit of Conjecture \ref{conj:ab},
as they obtain a full rainbow matching
(whereas we allow a constant number of unused colours).
However, our results are incomparable, 
as they need a stronger bound on edge multiplicities,
namely $\sqrt{n}/\log^2 n$.
Our algorithm is deterministic, whereas theirs is randomised,
and the analysis of our algorithm is simpler.
In any case, the two approaches are very different,
so we think that it will be valuable to pursue both sets of ideas,
and perhaps also the approach of Pokrovskiy,
in making further progress on Conjecture \ref{conj:ab}.


\begin{thebibliography}{99}
\bibitem{aharoniberger} R. Aharoni and E. Berger, 
Rainbow matchings in $r$-partite $r$-graphs, 
{\em Electron. J. Combin.} 16, 2009.
\bibitem{brualdi} R. A. Brualdi and H. J. Ryser, 
Combinatorial matrix theory, 
Cambridge University Press, 1991.
\bibitem{grww} P. Gao, R. Ramadurai, I. Wanless and N. Wormald,
Full rainbow matchings in graphs and hypergraphs, arXiv:1709.02665.
\bibitem{hatamishor} P. Hatami and P. W. Shor,
A lower bound for the length of a partial transversal in a
Latin square, {\em J. Combin. Theory Ser. A} 115:1103-1113, 2008.
\bibitem{pokrovskiy} A. Pokrovskiy, 
An approximate version of a conjecture of Aharoni and Berger, 
arXiv:1609.06346.
\bibitem{ryser} H. Ryser, Neuere Probleme in der Kombinatorik,
Vortrage \"uber Kombinatorik, Oberwolfach, pp. 69-91, 1967. 
\bibitem{stein} S. K. Stein, 
Transversals of Latin squares and their generalizations,
{\em Pacific J. Math.} 59:567-575, 1975.
\bibitem{wanless} I. M. Wanless,
Transversals in Latin squares: a survey,
{\em Surveys in Combinatorics}, 
Cambridge University Press, 2011.
\end{thebibliography}
\end{document}